\documentclass[11pt]{amsart}
\usepackage[left=1.5in, right=1.5in]{geometry} 
\usepackage{
 amsmath, 
 amsxtra, 
 amsthm, 
 amssymb, 
 etex, 
 mathrsfs, 
 mathtools, 
 tikz-cd, 
 bbm,
 xr,
 comment,
 enumitem}
\usepackage{lipsum}
\usepackage{tabularx}
\usepackage{cancel}
\usepackage[all]{xy}
\usepackage{hyperref}
\usepackage{url}
\usepackage{tipa}
\usepackage[normalem]{ulem}
\usepackage{dsfont}
\usepackage{colonequals}
\usepackage{stmaryrd}
\usepackage{color}
\usepackage{upgreek}
\usepackage[colorinlistoftodos,prependcaption,textsize=tiny]{todonotes}

\newtheorem{theorem}{Theorem}[section]
\newtheorem{lemma}[theorem]{Lemma}

\newtheorem{proposition}[theorem]{Proposition}

\numberwithin{equation}{section}

\theoremstyle{remark}
\newtheorem{remark}[theorem]{Remark}

\setlength{\parskip}{.5\baselineskip}

\usepackage[utf8]{inputenc}

\newcommand\EatDot[1]{}

\newcommand{\cyc}{{\mathrm{cyc}}}

\newcommand{\gp}{{\mathfrak{p}}}

\newcommand{\Gal}{{\mathrm{Gal}}}

\newcommand{\Sel}{{\mathrm{Sel}}}

\newcommand{\Q}{{\mathbb Q}}
\newcommand{\Z}{{\mathbb Z}}

\newcommand{\OO}{{\mathcal O}}

\DeclareMathOperator{\coker}{coker}

\newcommand{\col}{\mathrm{Col}}

\newcommand{\Qp}{\mathbb{Q}_p}
\newcommand{\Zp}{\mathbb{Z}_p}

\newcommand{\HIw}{H^1_{\mathrm{Iw}}}
\newcommand{\BK}{\mathrm{BK}}

\definecolor{Green}{rgb}{0.0, 0.5, 0.0}

\newcommand{\T}{\mathcal{T}}
\newcommand{\A}{\mathcal{A}}

\title[~]{On characteristic power series of dual signed Selmer groups}


\author[J. Ray]{Jishnu Ray}
\address[Ray]{Harish Chandra Research Institute, A CI of Homi Bhabha National Institute,  Chhatnag Road, Jhunsi, Prayagraj (Allahabad) 211 019 India}
\email{jishnuray@hri.res.in}

\author[F. Sprung]{Florian Sprung}
\address[Sprung]{School of Mathematical and Statistical Sciences, Arizona State University Tempe, AZ 85287-1804, USA}
\email{florian.sprung@asu.edu}

\keywords{Euler characteristic, non-ordinary, modular forms, signed Selmer groups, algebraic $p$-adic $L$-function}

\subjclass[2020]{Primary: 11R23, Secondary: 11R18, 11F11, 11F85}

\begin{document}

\maketitle
\begin{abstract}
We relate the cardinality of the $p$-primary part of the Bloch-Kato Selmer group over $\Q$ attached to a modular form at a non-ordinary prime $p$ to the constant term of the characteristic power series of the signed Selmer groups over the cyclotomic $\Z_p$-extension of $\Q$. This generalizes a result of Vigni and Longo in the ordinary case.  In the case of elliptic curves, such  results follow from earlier works by Greenberg, Kim, the second author, and Ahmed--Lim, covering both the ordinary and most of the supersingular case.
\end{abstract}

\section{Introduction}
The aim of this paper is to relate the size of a $p$-adic Selmer group attached to a modular form which is non-ordinary at $p$ to the constant term of the characteristic power series of the cyclotomic deformation of this Selmer group. This has been done in the case of elliptic curves, in  the ordinary and most of the supersingular (i.e. non-ordinary) case, and in the case of modular forms, but working with ordinary $p$. We review what is known in the next three subsections before describing our contribution.
\subsection{The case of elliptic curves (ordinary at $p$)} Let $E$ be an elliptic curve defined over a number field $F$ such that $E$ has good ordinary reduction at all the primes of $F$ above an odd prime $p$. Assume the $p$-primary Selmer group  $\Sel_p(E/F)$ is finite. Let $F_\cyc$ be the cyclotomic $\Z_p$-extension  of $F$. Then by a control theorem, $\Sel_p(E/F_{\rm{cyc}})$ is known to be cotorsion over the corresponding Iwasawa algebra of the Galois group $\Gamma_0:=\Gal(F_{\rm{cyc}}/F)$. Let $f _E(X) $ generate the characteristic ideal of the Pontryagin dual of $\Sel_p(E/F_{\rm{cyc}})$, considered as a power series by identifying $1+X$ with a topological generator of $\Gamma_0$. In \cite[Thm. 4.1]{Greenberg}, Greenberg showed that $f_E(0)\neq 0$ and 
\begin{equation}\label{eq:green}
f_E(0) \sim \frac{\#\Sel_p(E/F) \cdot \prod_{v \text{ bad }}c_v(E) \cdot \prod_{v \mid p} \#\big(\tilde{E}_v(\mathds{F}_v)_p\big)^2}{\#\big(E(F)_p\big)^2},
\end{equation}

where the symbol $\sim$ means that the two quantities differ by a $p$-adic unit. 
Here $c_v(E)$ is the Tamagawa number of $E$ at a prime $v$ of bad reduction, $\mathds{F}_v$ is the residue field of $F$ at $v$, $\tilde{E}_v(\mathds{F}_v)_p$ is the $p$-torsion of the group of $\mathds{F}_v$-rational points of the reduction $\tilde{E}_v$ of $E$ at $v$ and $E(F)_p$ is the $p$-torsion subgroup of the Mordell-Weil group $E(F)$.  


The product of quantities in \eqref{eq:green} can be reinterpreted as an Euler characteristic, so that
$$f_E(0) \sim \chi(\Gamma_0,\Sel_p(E/F_{\rm{cyc}}));$$
see \cite[Lemma 4.2]{Greenberg} both for the statement and the definition of $\chi(\Gamma_0,\Sel_p(E/F_{\rm{cyc}}))$.

\subsection{The case of elliptic curves (supersingular and mixed reduction)}  In the supersingular case, the classical Selmer group 
$\Sel_p(E/F_{\rm{cyc}})$ is not cotorsion. However, there are remedies. Denote by $S _p$ the primes of $F$ above $p$.  Suppose for the moment that for each $v \in S_p$, $a_v=1+p-\#\tilde{E}_v(\mathds{F}_p)=0 $, i.e. we are in a subcase of supersingular reduction. {Following the intuition that the cotorsion of the Selmer group fails because there are too many points in the local condition that defines it,} Kobayashi constructed signed (plus/minus) Selmer groups $\Sel_p^\pm(E/F_{\rm{cyc}})$ {using ``half'' of the local points}, and proved that when $F=\Q$, they are cotorsion over the corresponding Iwasawa algebra \cite{kobayashi03}.   
Suppose $p$ splits completely in $F$  and is totally ramified in $F_{\rm{cyc}}$.  The signed Selmer groups $\Sel_p^\pm(E/F_{\rm{cyc}})$ are conjectured to be cotorsion over the Iwasawa algebra of $\Gamma_0$. When this is the case, let $f^{\pm}$ be  generators of the characteristic power series of their Pontryagin duals.

When  $\Sel_p(E/F)$ is finite, one can use a control theorem to prove $\Sel_p^\pm(E/F_{\rm{cyc}})$ are cotorsion. Kim then showed that
$$f^{\pm}(0) \sim \#\Sel_p(E/F) \cdot \prod_\ell c_\ell.$$
Here $\ell$ runs over every prime and the Tamagawa factor $c_\ell=[E(\Q_\ell): E^0(\Q_\ell)]$. See \cite[Thm. 1.2 or Corollary 3.15 and the explanations in its proof]{Kim13} for the above assertions.

If one drops the assumption $a_v=0$, the second author has generalized Kobayashi's construction to construct a pair of chromatic Selmer groups $\Sel^\sharp$ and $\Sel^\flat$ \cite{sprung12}, and given an analogous formula of Kim's, all assuming that  $F=\Q$, see \cite[Lemmas 4.4, 4.5, 4.8]{sprung16}.

Returning to the case $a_v=0$, in Kim's results, the local condition appearing in the definition of $\Sel^+_p(E/F_{\rm{cyc}})$ is the same (``plus condition" of Kobayashi) for each $v$. Analogously, the local condition in $\Sel^-_p(E/F_{\rm{cyc}})$ is the same ``minus condition'' for each $v$. However, one may consider \textit{mixed} signed Selmer groups, where the local conditions may be different for different $v$. In this case, Kim's Euler characteristic formula has been generalized by Ahmed and Lim under certain additional hypotheses (see \cite[Thm. 1.1]{AhmedLim}).
\subsection{The case of modular forms (ordinary at $p$)}
Suppose $f(q)=\sum_{n \geq 1}a_n(f)q^n$ is a newform of even weight $k \geq 4$ and level $\Gamma_0(N)$. Let $\mathds{Q}_f=\Q(a_n(f) \mid n \geq 1)$ be the Hecke field of $f$. Suppose $ p \nmid 2N$ and fix a prime $\gp$ of $\mathds{Q}_f$ above $p$. Assume that $a_p(f)$ is a $\gp$-adic unit (i.e. $\gp$ is ordinary for $f$) and $a_p(f) \not\equiv 1 \text{ } (\text{mod } \gp).$ Following the notations of Longo and Vigni, we write $E$ for the completion of $\mathds{Q}_f$ at $\gp$ and $\OO_E$ for the valuation ring of $E$. Let $V$ be the self-dual twist of the representation $V_{f, \gp}$ of $G_\Q$ attached to $f$ by Deligne; hence $V=V_{f,\gp}(k/2)$. Choose a $G_\Q$-stable $\OO_E$-lattice $T$ of $V$ and set $A_f=V/T$. Let $\Sigma$ be a finite set of primes of $\Q$ containing $p$, primes dividing the level $N$ and the archimedean prime. 
Suppose that the Bloch-Kato Selmer group $\Sel_{\BK}(A_f/\Q)$ is finite. Then Greenberg's Selmer group $\Sel_{\mathrm{Gr}}(A_f/\Q_{\rm{cyc}})$ over the cyclotomic $\Z_p$-extension $\Q_{\rm{cyc}}$ of $\Q$ is cotorsion \cite[Prop. 4.1]{VigniLongo}. The reader may consult \cite[Sec. 3.4]{VigniLongo} for the definition of the Bloch-Kato and Greenberg Selmer groups. 
Let $\mathcal{F}$ be the  power series generating  the Pontryagin dual of $\Sel_{\mathrm{Gr}}(A_f/\Q_{\rm{cyc}})$. Vigni and Longo then showed the following Euler characteristic formula (see \cite[Thm. 1.1]{VigniLongo}).
\begin{equation}\label{VL}
\#(\OO_E/\mathcal{F}(0)\cdot \OO_E) = \# \Sel_{\BK}(A_f/\Q) \cdot \underset{v \in \Sigma, v \nmid p}{\prod} c_v(A_f).
\end{equation}
Here  $c_v(A_f):=[H^1_{\mathrm{ur}}(\Q_\ell,A_f):H^1_f(\Q_\ell,A_f)]$ (see \cite[Defn. 3.2]{VigniLongo}).

Vigni and Longo were actually able to deduce this Euler characteristic result for general ordinary representations satisfying additional hypotheses (see assumption 2.1 of \cite{VigniLongo}).

\subsection{The case of modular forms (non-ordinary at $p$)} This is the case missing from the literature and this is precisely what this article addresses. In this case, using the theory of Wach modules, Lei, Loeffler, and Zerbes constructed signed Selmer groups over $\Q_{\rm{cyc}}$ (see \cite{lei11compositio},  \cite{leiloefflerzerbes10},  \cite{leiloefflerzerbes11}). {Again, the issue with the classical Selmer group is that the local condition is too big. Instead of considering ``half'' of the local points as Kobayashi did, the authors are able to cut out two appropriate subspaces of the local condition that come from kernels of certain maps, the Coleman maps. This is an analogue and generalization of \cite{sprung12} for the case of elliptic curves.} These signed Selmer groups are generally conjectured to be cotorsion and this is known to hold in a large number of cases.  Many arithmetic properties of the signed Selmer groups have recently been proved by Hatley and Lei \cite{HatleyLei}. Our main goal in this article is to generalize \eqref{VL} for the characteristic power series obtained from the signed Selmer groups over $\Q_{\rm{cyc}}$. 

{To produce results analogous to those of Longo and Vigni work in the non-ordinary setting, we need the signed Selmer groups to behave as well as the Selmer groups in the ordinary case, but this is not guaranteed. Hatley and Lei in \cite{HatleyLei} considered families of twists of these signed Selmer groups, and showed that appropriate twists result in well-behaved signed Selmer groups. Our task is thus to work with these (infinitely many) twists of signed Selmer groups $\Sel_1(A(s)/\Q_{\rm{cyc}})$ and  $\Sel_2(A(s)/\Q_{\rm{cyc}})$ coming from twists $A(s)$ of $A_f$, where $s$ is an integer, and hope that they give us desirable arithmetic information.} For these $\Sel_i(A(s)/\Q_{\rm{cyc}})$, the local conditions at the prime $p$ are defined as the Tate-local orthogonal complement of the kernel of signed Coleman maps, which the interested reader can study in section \ref{sec:def} and the beginning of section \ref{main:body}. The arithmetic properties of such local conditions at the prime $p$ studied  by Hatley and Lei \cite{HatleyLei} need to be developed further for our purposes. In particular, we prove a ``Control Lemma at the prime $p$'' (Lemma \ref{a}), equating local cohomology groups of the form $H^1(\Qp, -)$ to $H^1(\Q_{\cyc,p}, -)^{\Gamma_0}$ sitting in an exact sequence. When the $\Sel_i(A(s)/\Q_{\rm{cyc}})$ are cotorsion,  let $f_{i,s}$ generate their characteristic ideals. Under some mild hypotheses made in \cite{HatleyLei} and some hypotheses on $s$, our main theorem is (see theorem \ref{Thm:main1})
$$\#\left(\frac{\OO_E}{f_{i,s}(0)\OO_E}\right)=\#\chi(\Gamma_0,\Sel_i(A(s)/\Q_{\rm{cyc}}))=\#\Sel_{\BK}(A(s)/\Q) \cdot  \prod_{\ell\in \Sigma, \\ \ell \nmid p} c_\ell(A(s)),$$
for $i=1$ or $i=2$, where $c_\ell(A(s)):=[H^1_{\mathrm{ur}}(\Q_\ell,A(s)):H^1_f(\Q_\ell,A(s))]$ is the $p$-part of the \textit{Tamagawa number} of $A(s)$ at $\ell$. Here, $\Sigma$ is still the same set of primes we chose in the previous subsection.

One comment about the hypotheses on $s$: The first equality holds for all but finitely many, while for the second, we need $s$ to be one of $0,\cdots, k-2$.

The astute reader might have noticed that the Tamagawa numbers are over the primes not diving $p$ in both our formula and in that of Longo--Vigni's, in contrast to the elliptic curve case. This is because only over those places the local conditions for the (signed) Selmer groups coincide with Greenberg's local conditions (and with the usual Bloch-Kato local conditions, again because we are away from $p$, see section \ref{sec: BKloc}). The recent techniques of Vigni and Longo \cite{VigniLongo} developed to deal with primes not dividing $p$ in the ordinary case are sufficiently general to come to the rescue in our setting, the non-ordinary case, as well.

We use this to perform a calculation of the Euler characteristic, and we are then in a position to deduce our result relating the characteristic power series of the Pontryagin dual of the signed Selmer groups with that of the cardinality of Bloch-Kato Selmer group over $\Q$ and the Tamagawa factors. 
\subsection{Outlook.}

Ponsinet obtained a similar formula for the constant terms of the characteristic power series for the dual signed Selmer groups of abelian varieties with supersingular reduction at $p$ such that the Hodge-Tate weights lie in $[0,1]$ (see \cite[Corollary 2.10]{Ponsinet_abelian}), under various other additional hypotheses (see \cite[Sections 1.2 and  1.3]{Ponsinet_abelian}). Proving such a result in a more general setting is currently underway. 

\section*{Acknowledgements and dedication}
We would like to thank Jeffrey Hatley, Antonio Lei and Meng-Fai Lim   for answering many questions and for their  comments on an earlier draft of this article. We also thank Matteo Longo and Stefano Vigni for their encouragement. The first author is supported by the Inspire research grant. The second author is supported by a Simons grant and an NSF grant. We also thank the referee for suggesting several improvements.

Sujatha started her work in Iwasawa theory by  computing an Euler characteristic for abelian varieties (in a noncommutative setting) back in the year 1999 \cite{CoatesSujatha1999}. It is a pleasure to dedicate this paper to her;  she introduced Iwasawa theory both to the first author and -- through the two books co-authored with John Coates \cite{CoatesSujathaotherbook,CoatesSujatha_book} -- to the second author of this article.

\section{Preliminaries}

\subsection{Notation} \label{sec:notation}
Several results in our paper rely on the results of \cite{HatleyLei}. In order to make it easier for the reader to jump between papers, we follow their notation (which in turn follows mostly that of \cite{Greenberg}) as much as possible. Thus, let $p$ be a fixed odd prime. The letter $\ell$ also denotes a prime. Let $\Q_{p,n}=\Q_p(\mu_{p^n})$, $\Q_\infty=\Q(\mu_{p^\infty})$ and $\Gamma=\Gal(\Q_\infty/\Q)\cong\Gal(\Q_p(\mu_{p^\infty})/\Q_p) \cong\Gamma_0 \times \Delta$, where $\Gamma_0\cong \Z_p$ and $\Delta \cong \Z/(p-1)\Z$. Let $\kappa$ and $\omega$ be the restriction of the cyclotomic character $\chi$ to $\Gamma_0$ and $\Delta$. Then $\Q_{\rm{cyc}}=\Q_\infty^{\Delta}$ is the  cyclotomic $\Z_p$-extension of $\Q$. In any subfield of $\Q_\infty$, we write $p$ for its unique prime above $p$, since $p$ is totally ramified in $\Q_\infty$.

Let $f=\sum a_n(f)q^n$ be a normalized new cuspidal eigenform of even weight $k \geq 2$, level $N$ and nebentypus $\varepsilon$. We assume that $p \nmid N$, $f$ is non-ordinary at $p$ and $a_n(f)$ is defined over a totally real field for all $n$. Let $E$ be a finite extension of $\Q_p$ containing $a_n(f)$ for all $n$, $\varpi$ a uniformizer of $E$, and $V_f$ the $E$-linear Galois representation attached to $f$ constructed by Deligne \cite{Deligne1971}. Let $T_f$ be the canonical $G_\Q$-stable $\OO_E$-lattice in $V_f$ defined by Kato \cite[Sec. 8.3]{Kato}. Let $A_f=V_f/T_f(1)$. Note that $V_f$ has Hodge-Tate weights $\{0,1-k\}$, where the convention is that the Hodge-Tate weight of the cyclotomic character is $1$. Let $\T_f$ be the Tate twist $T_f(k-1)$ which has Hodge-Tate weights $\{0,k-1\}$. 

We let $\Lambda:=\OO_E[[\Gamma_0]]$ and let $\Lambda(\Gamma):=\OO_E[\Delta][[X]]$, the Iwasawa algebra of $\Gamma$. There are certain $\Lambda(\Gamma)$-morphisms called Coleman maps for each $i=1,2$ constructed using the theory of Wach modules, $$\col_{f,i}:\HIw(\Q_p,\T_f) \rightarrow \Lambda(\Gamma)$$  (see e.g.\cite[p. 1264 after eq. (2.1)]{HatleyLei}). Here, $\HIw(\Q_p,\T_f):=\varprojlim H^1(\Q_{p,n},\T_f)$.
For any finite extension $K/\Q_p$ contained in $\Q_{\infty,p}$, by Tate-duality, there exists the Tate pairing
\begin{equation}\label{TP}
H^1(K,\T_f) \times H^1(K,A_f) \rightarrow \Q_p/\Z_p.
\end{equation}
Upon taking direct and inverse limits, we obtain a pairing 
\begin{equation}\label{Iwpairing}
\HIw(\Q_p,\T_f) \times H^1(\Q_{\infty,p},A_f) \rightarrow \Q_p/\Z_p.
\end{equation}

{Let $H^1_i(K,A_f)$ be the  orthogonal complement (under the Tate-pairing \eqref{TP}) of the image of $\ker \col_{f,i}$ under the natural map $\HIw(\Q_p,\T_f) \rightarrow H^1(K_v,\T_f).$
Let $H_i^1(\Q_{\infty,p},A_f)$ be the direct limit $\varinjlim H^1_i(K,A_f)$. The group $H_i^1(\Q_{\infty,p},A_f)$ can also be identified with the orthogonal complement of $\ker \col_{f,i}$ under the  pairing \eqref{Iwpairing}. In \cite[Remark 2.5]{HatleyLei}, Hatley--Lei showed the following compatibility relation, which is crucial when analyzing the control diagram for the  local conditions at $p$ (see Lemma \ref{a2}):
\begin{equation}\label{Comp_local_p}
H^1_i(\Q_{\infty,p},A_f)^{\Gal(\Q_{\infty,p}/K)}=H^1_i(K,A_f).
\end{equation}
Following \cite[Section 1.6]{Ponsinet_abelian} or \cite[Section 2.1]{LP20}, we set 
\begin{equation}\label{cyc_loc_p}
H^1_i(\Q_{\cyc,p},A_f):=H^1_i(\Q_{\infty,p},A_f)^{\Delta}.
\end{equation}
} These are the local condition at $p$ which will be used in defining the signed Selmer groups below.
When $v$ is a prime of $\Q_\infty$ not above the prime $p$, the local condition is
\begin{equation}\label{unr}
H^1_i(\Q_{\infty,v},A_f):=H^1_{\mathrm{un}}(\Q_{\infty,v},A_f),
\end{equation}
which is the unramified subgroup of $H^1(\Q_{\infty,v},A_f)$.

\subsection{The signed Selmer groups}\label{sec:def} The signed Selmer groups over $\Q_\infty$ are defined as 

$$\Sel_i(A_f/\Q_\infty)=\ker\Big(H^1(\Q_\infty,A_f) \rightarrow \prod_vH^1_{/i}(\Q_{\infty,v},A_f)\Big),$$
where $v$ runs through all the places of $\Q_\infty$ and $H^1_{/i}(\Q_{\infty,v},A_f):=\frac{H^1(\Q_{\infty,v},A_f)}{H^1_i(\Q_{\infty,v},A_f)}.$

When $L$ is a subfield of $\Q_\infty$ and $v$ is a prime of $L$ not above $p$, we can define  $H^1_i(L_v,A_f)$ in the same way as in \eqref{unr}. 

Hence the signed Selmer groups are defined as 
$$\Sel_i(A_f/L):=\ker\Big(H^1(L,A_f) \rightarrow \prod_v H^1_{/i}(L_v,A_f)\Big).$$ 
{Using \eqref{cyc_loc_p},  we obtain $$\Sel_i(A_f/\Q_\cyc) \cong \Sel_i(A_f/\Q_\infty)^\Delta$$ as in \cite[p. 1267]{HatleyLei}.}
 Given an integer $s$, let $A_{f,s}=A_f \otimes \chi^s$, and $\T_{f,s}=\T_f \otimes \chi^{-s}$. These twists commute with cohomology, so that one can define twisted the signed Selmer groups $\Sel_i(A_{f,s}/\Q_\infty)$ and $\Sel_i(A_{f,s}/L)$ (cf. \cite[p. 1268 before Remark 2.6]{HatleyLei}). Recall that $\kappa$ (resp. $\omega$) is the restrictions of the cyclotomic character $\chi$ to $\Gamma_0$ (resp. $\Delta$).  
 For $i \in \{1,2\}$, the signed Selmer groups over the cyclotomic $\Z_p$-extension $\Q_{\rm{cyc}}$ are then given by
 $$\Sel_i(A_{f,s}/\Q_{\rm{cyc}}) \cong 
 \Sel_i(A_{f,s}/\Q_\infty)^\Delta \cong \Sel_i(A_f/\Q_\infty)^{\omega^{-s}} \otimes \kappa^s,$$
 as $\Lambda=\OO_E[[\Gamma_0]]$-modules,  see \cite[Remark 2.6]{HatleyLei}. 
 Here  $\Sel_i(A_f/\Q_\infty)^{\omega^{-s}}$ is the $\omega^{-s}$-isotypic component of $\Sel_i(A_f/\Q_\infty)$. If $\theta$ is a character of $\Delta$, then the $\theta$-isotypic component of $\Sel_i(A_{f,s}/\Q_\infty)$ is $\Sel_i(A_{f,s}/\Q_\infty)^\theta$ and we have the following isomorphisms of $\Lambda$-modules.
 $$\Sel_i(A_{f,s}/\Q_\infty)^\theta \cong \Sel_i(A_f/\Q_\infty)^{\theta \omega^{-s}} \otimes \kappa^s \cong \Sel_i(A_{f,s}(\theta^{-1})/\Q_{\rm{cyc}})$$
(See again \cite[a little further down in Rem. 2.6]{HatleyLei}).
 We record the following hypotheses also made in \cite{HatleyLei}.

 \noindent\textbf{(irred)}: The $G_\Q$-representation $T_f/\varpi T_f$ is irreducible.

 \noindent\textbf{(inv)}: For all $m \geq 0$, $A_f(m)^{G_{\Q_\infty,p}}=0$.
 
  \noindent\textbf{(tor)}:  For any character $\theta$ of $\Delta$, the Selmer groups $\Sel_i(A_f/\Q_\infty)^\theta$ are both cotorsion over $\Lambda$ for $i=1,2$\footnote{In \cite{HatleyLei}, the statement is about $\Sel_i(A_{f,s}/\Q_\infty)^\theta$ for any fixed integer $s$, which is equivalent to our hypothesis thanks to \cite[Remark 2.6]{HatleyLei}, noting that twisting by the cyclotomic character does not change $\Lambda$-cotorsion-ness of a module.}.
  

Various conditions guarantee these hypotheses, which include many cases.
 For the convenience of the reader, we recall these hypotheses and the appropriate references. 
\begin{center}
\begin{table}[h]
    \centering
    \begin{tabular}{|c||c|c|}
    \hline
      Hypothesis   &  Sufficient conditions & References \\
      \hline\hline
       \textbf{(irred)}  &\begin{tabular}{{@{}c@{}}}
       $p>k$ and \\$a_q(f)\not\equiv 1+q^{k-1} \pmod{\varpi}$ \\for some prime $q\equiv 1 \pmod{N}$ \\    \end{tabular}  & \begin{tabular}{{@{}c@{}}}\cite[Proof of Lem. 2.4]{DFG},\\ cf. \cite[Lem. 7.1]{HatleyLei}  \end{tabular}\\
       \hline
      \textbf{(inv)}  & $k\leq p$ &  \cite[Lem. 4.4]{lei11compositio}\\
       \hline
       \textbf{(tor)} & $a_p=0$ or $k \geq 3$& \cite[Start of Sec. 7, p. 1289]{HatleyLei}  \\
       \hline

    \end{tabular}
    
    \caption{\label{table}~}
\end{table} 
\end{center}

\begin{remark} The torsion assumption \textbf{(tor)} guarantees that $\Sel_i(A(s)/\Q_{\rm{cyc}})$ is $\Lambda$-cotorsion for all $s$. This follows from \cite[Remark 2.6]{HatleyLei}, and noting that twisting a finitely generated $\Lambda$-cotorsion module by the cyclotomic character results in a finitely generated $\Lambda$-cotorsion module. \end{remark}

In the references 
\cite[Thm. 7.3]{kobayashi03}, \cite[Prop. 6.4]{lei11compositio}, \cite[Thm. 6.5]{leiloefflerzerbes10}, \cite[Thm. 7.14]{sprung12}, one can find proofs that the conditions mentioned in the second column and the last row of Table 1 imply that the hypothesis \textbf{(tor)} is satisfied.


\subsection{The Bloch-Kato Selmer groups}\label{sec: BKloc} Let $V$ be a $\Q_p$-vector space defined with a $G_\Q$-action and $T$  be a $\Z_p$-stable lattice of $V$. 
Let us suppose that $\ell \neq p$. Recall that for $* \in \{V,V/T\}$ the unramified local cohomology group is defined as 
$$H^1_{\mathrm{ur}}(\Q_{\ell},*) :=\ker\big(H^1(\Q_\ell,*) \rightarrow H^1(I_\ell,*)\big),$$
where $I_\ell$ is the inertia subgroup at $\ell$.

Then the Bloch-Kato local conditions are defined as
\[ H^1_f(\Q_\ell, V)=\begin{cases} 
      H^1_{\mathrm{ur}}(\Q_\ell, V) & \ell \neq p, \\
      
      \ker\big(H^1(\Q_\ell,V) \rightarrow H^1(\Q_\ell,V \otimes \mathbb{B}_\mathrm{cris})\big) & \ell=p, 
   \end{cases}
\]
where $\mathbb{B}_\mathrm{cris}$ is the Fontaine's crystalline period ring. We also define $H^1_f(\Q_\ell,T)$ (resp. $H^1_f(\Q_\ell, V/T))$ as the preimage (resp. image) of $H^1_f(\Q_\ell,V)$ under the natural inclusion map (resp. projection map). 
In the $\ell \neq p$ case, we have the following  diagram (see e.g \cite[Section 3.1]{longovigni}) 
\begin{equation}
	\begin{tikzcd}
	0 \arrow[r] & H^1_{\mathrm{ur}}(\Q_\ell,V/T) \arrow[r]  & H^1(\Q_\ell,V/T) \arrow[r] &  H^1(I_\ell,V/T)  \\
	0 \arrow[r] & H^1_{\mathrm{ur}}(\Q_\ell,V)  \arrow[r] &  H^1(\Q_\ell,V) \arrow[u]  \arrow[r] &  H^1(I_\ell,V) \arrow[u],
	\end{tikzcd}
	\end{equation}
which shows that $H^1_f(\Q_\ell,V/T) \subset H^1_{\mathrm{ur}}(\Q_\ell,V/T)$. The index $$c_\ell(V/T):=[H^1_{\mathrm{ur}}(\Q_\ell,V/T):H^1_f(\Q_\ell,V/T)]$$
is finite (see \cite[Lemma 1.3.5]{Rubin2014} or \cite[Lemma 3.1]{VigniLongo}) and is defined as the \textit{$p$-part of the Tamagawa number} of $V/T$ at $\ell$.

Let $\Sigma$ be a finite set of places of $\Q$ containing the prime $p$, all the places that divide the level $N$, and the archimedean prime. Let $\Q_\Sigma$ be the maximal extension of $\Q$ unramified outside $\Sigma$. 
The Bloch-Kato Selmer group is defined as 
$$\Sel_{\BK}(A_{f,s}/\Q):=\ker\Big(H^1(\Q_\Sigma/\Q,A_{f,s}) \rightarrow \prod_{\ell \in \Sigma}H^1_{/f}(\Q_\ell,A_{f,s})\Big),$$
where 
$$H^1_{/f}(\Q_\ell,A_{f,s}):=\frac{H^1(\Q_\ell,A_{f,s})}{H^1_{f}(\Q_\ell,A_{f,s})}.$$

To conclude this section, we recall an important result of Greenberg.
\begin{lemma}[Greenberg's Lemma]\label{Greenlemma}
Let $M$ be a cofinitely generated, cotorsion module over the Iwasawa algebra $\OO[[T]]$, where $\OO$ is the ring of integers of any finite degree extension of $\Qp$. Let $f(T)$  be a generator of the characteristic ideal of the Pontryagin dual of $M$. Assume that $M^\Gamma$ is finite. Then $M_\Gamma$ is finite, $f(0) \neq 0$, and $$f(0) \sim |M^\Gamma|/|M_\Gamma|=\chi(M)$$
\end{lemma}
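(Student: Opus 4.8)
The statement to prove is Greenberg's Lemma (Lemma \ref{Greenlemma}): for a cofinitely generated, cotorsion $\OO[[T]]$-module $M$ with $M^\Gamma$ finite, one has $M_\Gamma$ finite, $f(0) \neq 0$, and $f(0) \sim |M^\Gamma|/|M_\Gamma|$.

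\medskip

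\noindent\textbf{Proof proposal.}

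The plan is to dualize and work with $X := M^\vee$, the Pontryagin dual, which is a finitely generated torsion $\OO[[T]]$-module, and then invoke the structure theorem for such modules. First I would record the translation between invariants/coinvariants on $M$ and on $X$: Pontryagin duality exchanges $M^\Gamma$ with $X_\Gamma = X/TX$ and $M_\Gamma$ with $X^\Gamma = X[T]$, so the hypothesis ``$M^\Gamma$ finite'' becomes ``$X/TX$ finite,'' and the claim becomes ``$X[T]$ finite, $f(0)\neq 0$, and $f(0) \sim |X/TX|/|X[T]|$.'' Next, by the structure theorem there is a pseudo-isomorphism $X \to E := \bigoplus_{j} \OO[[T]]/(g_j)$ with $\prod_j g_j = f$ (up to a unit), having finite kernel and cokernel. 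The key point is that $\cdot T$ acts on all the relevant finite modules (the kernel $Y$ and cokernel $Z$ of $X \to E$, which are finite torsion $\OO$-modules), so applying the snake lemma to multiplication by $T$ on the exact sequences $0 \to Y \to X \to X' \to 0$ and $0 \to X' \to E \to Z \to 0$ shows that $|X[T]|/|X/TX| = |E[T]|/|E/TE|$ — the finite error terms contribute trivially to this ratio because for a finite module $W$, $|W[T]| = |W/TW|$. So it suffices to prove the statement for $E$, i.e. for a single quotient $\OO[[T]]/(g)$ with $g \in \OO[[T]]$.

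For $E = \OO[[T]]/(g)$: multiplication by $T$ on $E$ has kernel $(g : T)/(g)$ and cokernel $\OO[[T]]/(g,T)$. The finiteness of $E/TE = \OO[[T]]/(g,T) = \OO/(g(0))$ is equivalent to $g(0) \neq 0$, which is exactly $f(0) \neq 0$ after taking the product; and then $|E/TE| = |\OO/(g(0))|$. One then checks $E[T]$ is finite as well (if $g(0) \neq 0$ then $g$ is not divisible by $T$, and a short argument with the fact that $\OO[[T]]$ is a UFD / the Weierstrass preparation theorem gives $(g:T) = (g)$, so $E[T] = 0$). Hence $|E[T]|/|E/TE| = 1/|\OO/g(0)\OO|$, and since $f(0) = \prod_j g_j(0)$ up to a unit, $|\OO/f(0)\OO| = \prod_j |\OO/g_j(0)\OO|$, giving $\chi(M) = |X[T]|/|X/TX| \cdot (\text{unit adjustment}) = f(0)$ up to a $p$-adic unit, as desired. (In the degenerate case some $g_j$ is a power of $\varpi$, i.e. $X$ has positive $\mu$-invariant, one sees directly that $E/TE$ is infinite, contradicting the hypothesis, so that case does not arise.)

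The main obstacle — really the only subtle point — is handling the pseudo-isomorphism bookkeeping cleanly: one must make sure the finite kernel and cokernel modules genuinely drop out of the Euler-characteristic ratio, which rests on the elementary but essential observation that a finite $\OO[[T]]$-module $W$ has $|W^\Gamma| = |W_\Gamma|$ (both equal to $|W[T]|$ and $|W/TW|$, which have equal cardinality since the exact sequence $0 \to W[T] \to W \xrightarrow{T} W \to W/TW \to 0$ forces $|W[T]| = |W/TW|$). Everything else is a direct computation with $\OO[[T]]/(g)$ using Weierstrass preparation. I would present it in exactly this order: duality dictionary; reduce to elementary modules via the structure theorem and snake lemma; compute for $\OO[[T]]/(g)$; reassemble and read off $f(0) \sim |M^\Gamma|/|M_\Gamma|$.
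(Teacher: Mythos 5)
The paper does not actually prove this lemma: it cites Greenberg's original \cite[Lemma 4.2]{greenberg89} and observes that the argument there goes through verbatim after replacing $\Z_p$ by $\OO_E$. You, by contrast, give a complete self-contained proof via Pontryagin duality, the structure theorem for finitely generated torsion $\OO[[T]]$-modules, and a snake-lemma bookkeeping step to reduce to elementary cyclic modules. This is the standard route and, as far as I can tell, essentially the same mechanism Greenberg uses; your proof is correct in its main line, and usefully makes explicit the ingredients the paper leaves implicit (the duality dictionary $M^\Gamma \leftrightarrow X/TX$, $M_\Gamma \leftrightarrow X[T]$, the multiplicativity of the ratio $|\cdot[T]|/|\cdot/T\cdot|$ in exact sequences, the fact that a finite module contributes trivially, and the direct computation on $\OO[[T]]/(g)$).

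One parenthetical remark in your write-up is wrong and should be deleted. You claim that if some $g_j$ is a power of $\varpi$ (positive $\mu$-invariant) then $E/TE$ is infinite, contradicting the hypothesis, so this case cannot arise. That is false: for $g_j=\varpi^m$ one has $\OO[[T]]/(\varpi^m,T)\cong\OO/\varpi^m\OO$, which is finite, and $g_j(0)=\varpi^m\neq 0$. Positive $\mu$-invariant is entirely consistent with $M^\Gamma$ being finite and is handled correctly by your main computation (indeed it simply contributes a power of $\varpi$ to $f(0)$). The genuine obstruction to finiteness of $E/TE$ is $T\mid g_j$ for some $j$, i.e.\ $g_j(0)=0$, which is exactly the case your argument excludes. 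So the aside conflates two unrelated degeneracies; removing it leaves the proof intact and correct.
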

The quantity on the right is the \textit{Euler characteristic} of $M$, which will be made precise in the main section.
\begin{proof}When $\OO=\Zp$, this is \cite[Lemma 4.2]{greenberg89}. To make \cite[Proof of Lemma 4.2]{Greenberg} work for general $\OO$, notice that all you have to do is replace the first appearance of `$\Zp$' by `$\OO$.'\end{proof}

\section{Main result}\label{main:body}
Let $s$ be any integer and $\theta$ be a character of $\Delta$. The Pontryagin dual of $A_f(\kappa^s\theta^{-1})$ is $\T_f(\kappa^{-s}\theta)=\T_{f,s}(\theta\omega^s)$. Let $\A_{f,s}:=\T_{f,s} \otimes E/\OO_E=A_{f,k-s-2}$. We will write $$A(s):=\A_{f,s}(\theta \omega^s).$$ Note that we slightly differ from the notation of  \cite[Proof of Prop. 3.3, line -1 in page 1274]{HatleyLei} where it is denoted as $A$, since they fix $s$. 

We define the $\Gamma_0$-Euler characteristic of $\Sel_i(A(s)/\Q_{\rm{cyc}})$ as $$\chi(\Gamma_0,\Sel_i(A(s)/\Q_{\rm{cyc}}))=\frac{\#H^0(\Gamma_0,\Sel_i(A(s)/\Q_{\rm{cyc}}))}{\#H^1(\Gamma_0,\Sel_i(A(s)/\Q_{\rm{cyc}}))},$$ whenever the quantitites are finite.

When  $\Sel_i(A(s)/\Q_{\rm{cyc}})$ is $\Lambda$-cotorsion, let $f_{i,s} \in \Lambda$ be a  characteristic power series that generates its Pontryagin dual. 

\begin{theorem}\label{Thm:main1}
 Assume \textbf{(tor)}, \textbf{(inv)}, and \textbf{(irred)}. Choose $i\in \{1,2\}$. Then
for all but finitely many $s$, $\chi(\Gamma_0, \Sel_i(A(s)/\Q_{\rm{cyc}}))$ is well-defined. Moreover, $\Sel_{i}(A(s)/\Q)$ is finite, $f_{i,s}(0)\neq 0$, and  
\begin{equation}\label{eq: main0}
 \#\left(\frac{\OO_E}{f_{i,s}(0)\OO_E}\right)=\chi(\Gamma_0,\Sel_i(A(s)/\Q_{\rm{cyc}}))=\#\Sel_{i}(A(s)/\Q) \cdot  \prod_{\ell\in \Sigma, \\ \ell \nmid p} c_\ell(A(s)).   
\end{equation}

Further, suppose that $s \in \{0,...,k-2\}$. If $i=1$, assume also that $a_p(f) \neq \varepsilon(p)p^s +p^{k-s-2}$. Then $\Sel_i(A(s)/\Q)$ and $\Sel_{\BK}(A(s)/\Q)$ are finite, $f_{i,s}(0)\neq 0$, and  
\begin{equation}\label{eq: main1}
 \#\left(\frac{\OO_E}{f_{i,s}(0)\OO_E}\right)=\chi(\Gamma_0,\Sel_i(A(s)/\Q_{\rm{cyc}}))=\#\Sel_{\BK}(A(s)/\Q) \cdot  \prod_{\ell\in \Sigma, \\ \ell \nmid p} c_\ell(A(s)).   
\end{equation}

Here $c_\ell(A(s)):=[H^1_{\mathrm{ur}}(\Q_\ell,A(s)):H^1_f(\Q_\ell,A(s))]$ is the $p$-part of the \textit{Tamagawa number} of $A(s)$ at $\ell$. 
\end{theorem}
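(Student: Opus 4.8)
The plan is to build everything on top of Greenberg's Lemma (Lemma \ref{Greenlemma}): once we know that $\Sel_i(A(s)/K_\infty)$ is $\Lambda$-cotorsion (which is guaranteed by \textbf{(tor)} and the twisting remark) and that its $\Gamma_0$-invariants are finite, we immediately get $f_{i,s}(0)\neq 0$ and the first equality in \eqref{eq: main0}, identifying $\#(\OO_E/f_{i,s}(0)\OO_E)$ with $\chi(\Gamma_0,\Sel_i(A(s)/K_\infty))$. So the real work is to (a) show finiteness of $\Sel_i(A(s)/\Q)$ for all but finitely many $s$, hence finiteness of the $\Gamma_0$-invariants, and (b) compute the Euler characteristic in terms of $\#\Sel_i(A(s)/\Q)$ and the Tamagawa factors. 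For step (a) I would run a global Euler characteristic / cotorsion argument: the generic ($\Lambda$-valued) behavior of the signed Selmer group controls the specializations, and for all but finitely many specializations the characteristic power series does not vanish at the relevant height-one prime, forcing finiteness. Concretely, $\Sel_i(A(s)/\Q)$ maps into $\Sel_i(A(s)/K_\infty)^{\Gamma_0}$ with controlled kernel and cokernel (via a control theorem of the type in \cite{HatleyLei}), and the latter is finite exactly when $f_{i,s}(0)\neq 0$; excluding the finitely many $s$ where some relevant constant term vanishes gives the claim.

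The heart of the argument is the Euler characteristic computation, and here the strategy is the standard Greenberg-style diagram chase combined with the "Mini-Control Lemma" (Lemma \ref{a}) for the local condition at $p$. I would set up the Poitou--Tate / Greenberg fundamental diagram comparing the $\Gamma_0$-cohomology of $\Sel_i(A(s)/K_\infty)$ with $\Sel_i(A(s)/\Q)$: a snake-lemma argument over the defining sequences of the Selmer groups expresses $\chi(\Gamma_0,\Sel_i(A(s)/K_\infty))$ as $\#\Sel_i(A(s)/\Q)$ times a product of local discrepancy factors, one for each place in $\Sigma$. For $\ell\nmid p$, the local condition for the signed Selmer group is the unramified one \eqref{unr}, which agrees with the Greenberg / Bloch-Kato condition away from $p$; the local discrepancy factor at such $\ell$ is then precisely $c_\ell(A(s))=[H^1_{\mathrm{ur}}(\Q_\ell,A(s)):H^1_f(\Q_\ell,A(s))]$, recovering the Tamagawa factor as in \cite{VigniLongo}. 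For the archimedean place and the place $p$, the contribution must be trivial: at the infinite place this is automatic since $p$ is odd, and at $p$ this is exactly where Lemma \ref{a} is used — it identifies $H^1(\Qp,-)$ with $H^1(K_{\infty,p},-)^{\Gamma_0}$ inside an exact sequence in such a way that the local Euler characteristic factor at $p$ is $1$ (or, more precisely, cancels). I would follow the structure of \cite[Sections 3--4]{VigniLongo}, substituting their ordinary local analysis at $p$ with the signed-Coleman-map analysis of Hatley--Lei plus Lemma \ref{a}.

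For the passage to \eqref{eq: main1}, the point is to replace $\Sel_i(A(s)/\Q)$ by the Bloch-Kato Selmer group $\Sel_{\BK}(A(s)/\Q)$ when $s\in\{0,\dots,k-2\}$. The signed local condition at $p$ and the Bloch-Kato local condition $H^1_f(\Qp,-)$ need not agree in general, but when $s$ lies in this critical range, $A(s)$ (equivalently $\T_f$ twisted appropriately) has Hodge-Tate weights in a range where the crystalline/finite part is fully captured by the kernel of one of the two signed Coleman maps; this is where the extra hypothesis $a_p(f)\neq \varepsilon(p)p^s+p^{k-s-2}$ for $i=1$ enters, ruling out the exceptional eigenvalue situation where the relevant Coleman map degenerates. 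So I would show that $H^1_i(\Qp,A(s))=H^1_f(\Qp,A(s))$ under these hypotheses — invoking the Wach-module description of the Coleman maps and the classification of crystalline extensions à la Bloch-Kato — and conclude that $\Sel_i(A(s)/\Q)=\Sel_{\BK}(A(s)/\Q)$, at which point \eqref{eq: main1} follows from \eqref{eq: main0}.

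The main obstacle I anticipate is controlling the local factor at $p$ in the Euler characteristic: unlike the ordinary case, there is no obvious short exact sequence realizing the local condition as cohomology of a subrepresentation, so one must work through the signed Coleman maps and their Iwasawa-theoretic kernels, and verify that the "Mini-Control Lemma" genuinely forces the $p$-local discrepancy to vanish rather than contribute an uncontrolled power of $\varpi$. A secondary technical point is bookkeeping the twists $A(s)=\A_{f,s}(\theta\omega^s)$ consistently so that the hypotheses \textbf{(inv)} (ensuring $H^0$ vanishing, hence no spurious invariants) and \textbf{(irred)} apply to the twisted representations for all relevant $s$; this should be routine given the isotypic-component formalism recalled from \cite[Remark 2.6]{HatleyLei}, but it needs care to identify exactly which finitely many $s$ must be excluded.
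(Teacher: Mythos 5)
Your proposal follows the same route as the paper: Greenberg's Lemma to convert the constant term to an Euler characteristic, a snake-lemma chase on the fundamental Greenberg-style diagram, the unramified/Bloch--Kato local analysis at $\ell \nmid p$ to produce the Tamagawa factors, the Mini-Control Lemma at $p$ to kill the local contribution there, and citations to Hatley--Lei to identify the signed and Bloch--Kato Selmer groups on the critical strip. So the strategy is essentially the paper's.

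One step is elided in your outline, and it is load-bearing. The snake lemma applied to the fundamental diagram computes only $\#\Sel_i(A(s)/K_\infty)^{\Gamma_0}$ as $\#\Sel_i(A(s)/\Q)\cdot\prod_{\ell\nmid p} c_\ell(A(s))$, not the Euler characteristic. To pass from the invariants to $\chi(\Gamma_0,\Sel_i(A(s)/K_\infty)) = \#\Sel_i(A(s)/K_\infty)^{\Gamma_0}/\#\Sel_i(A(s)/K_\infty)_{\Gamma_0}$ you must separately establish that the $\Gamma_0$-coinvariants of the signed Selmer group vanish (equivalently, its Pontryagin dual has no $\Lambda$-free part and no $(\gamma_0-1)$-torsion, once cotorsion and finiteness of invariants are known). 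You describe the snake lemma as directly ``express[ing] $\chi(\Gamma_0,\Sel_i(A(s)/K_\infty))$'' in terms of the local discrepancies, but that is only true after the coinvariants are shown to vanish; the paper handles this by citing the end of the proof of Theorem 3.1 in Hatley--Lei. Without that input, your argument produces a lower bound on the Euler characteristic, not the equality. Everything else is in order, including your observation that the archimedean place contributes nothing since $p$ is odd and your identification of which hypotheses are doing what work (irred/inv for the global-to-local kernel/cokernel control, tor for cotorsionness, the $a_p(f)$ condition ruling out the exceptional eigenvalue for $i=1$).
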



We record a proposition and two lemmas before proving the theorem. 

\begin{proposition}[Hatley--Lei]\label{prop:determins_s}
Under the hypotheses \textbf{(tor)},  {\textbf{(inv)}, and \textbf{(irred)}}, we have for each $i=1$ or $2$ that for all but finitely many integers $s$:\begin{enumerate}
    \item $\big(\Sel_i(A(s)/\Q_{\rm{cyc}}) \big)^{\Gamma_0}$  is finite,
    \item The { restriction map 
    $$\Sel_i(A(s)/\Q)  \rightarrow \big(\Sel_i(A(s)/\Q_{\rm{cyc}}) \big)^{\Gamma_0}$$ is injective with finite cokernel}.
\end{enumerate}
\end{proposition}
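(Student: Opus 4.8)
The plan is to deduce both statements from the standard control-theorem machinery for cyclotomic $\Z_p$-extensions, following the inflation--restriction exact sequence at each place and then controlling the local error terms by varying $s$. First I would fix $i$ and write the restriction map $\Sel_i(A(s)/\Q)\to\Sel_i(A(s)/K_\infty)^{\Gamma_0}$ as sitting inside a commutative diagram whose rows are the defining sequences of the global Selmer groups and whose columns are restriction maps on $H^1(\Q_\Sigma/\Q,-)$ and on the products of local quotients $\prod_v H^1_{/i}(-,A(s))$. The snake lemma then bounds the kernel and cokernel of the central restriction map by $H^1(\Gamma_0, A(s)^{G_{K_\infty}})$, by $H^2(\Gamma_0,A(s)^{G_{K_\infty}})=0$ (cohomological dimension one of $\Gamma_0\cong\Z_p$), and by the kernels/cokernels of the local restriction maps $H^1_{/i}(\Q_v,A(s))\to H^1_{/i}(K_{\infty,v},A(s))^{\Gamma_{0,v}}$ for $v\in\Sigma$. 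The global term $A(s)^{G_{K_\infty}}$ vanishes for all but finitely many $s$: since $A(s)$ is (a twist of) $A_f$ by $\kappa^s\theta^{-1}$, a nonzero $G_{K_\infty}$-invariant would force the semisimplification of $T_f/\varpi$ to contain the corresponding power of the cyclotomic character, which by \textbf{(irred)} (irreducibility of $T_f/\varpi$, rank $\geq 2$) can happen for at most one residue class of $s$ mod the order of $\kappa$-torsion acting, hence for finitely many $s$; in any case the stronger hypothesis \textbf{(inv)} already gives $A_f(m)^{G_{\Q_{\infty,p}}}=0$ for all $m\geq 0$, which kills the global $H^0$ outright and makes $H^1(\Gamma_0,A(s)^{G_{K_\infty}})$ finite (indeed zero) for the relevant $s$.

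Next I would handle the local terms. For $v\nmid p$ the local condition is the unramified one, and the cokernel of $H^1_{/\mathrm{ur}}(\Q_v,A(s))\to H^1_{/\mathrm{ur}}(K_{\infty,v},A(s))^{\Gamma_{0,v}}$ is controlled by $H^1$ of the (pro-$p$, procyclic) local Galois group with coefficients in the finite module $A(s)^{I_v}$-type terms; these are finite, and vanish for all but finitely many $s$ by the same twisting argument applied to the residual representation restricted to a decomposition group at $v$ (only finitely many twists can produce a nontrivial invariant or coinvariant). For $v\mid p$ this is exactly where the ``Mini-Control Lemma'' (lemma \ref{a}) referenced in the introduction is designed to apply: it equates $H^1(\Qp,-)$ with $H^1(K_{\infty,p},-)^{\Gamma_0}$ inside an exact sequence, so the local restriction map at $p$ for the signed condition $H^1_i$ has finite (and, for all but finitely many $s$, controlled) kernel and cokernel. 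Assembling the snake lemma output then gives both (1) that $\Sel_i(A(s)/K_\infty)^{\Gamma_0}$ is finite for all but finitely many $s$ — here one also uses \textbf{(tor)}, so that the dual Selmer module is a finitely generated torsion $\Lambda$-module and hence $\Sel_i(A(s)/K_\infty)^{\Gamma_0}$ is finite precisely when $f_{i,s}(0)\neq 0$, which fails for at most finitely many $s$ since $f_{i,s}$ is a nonzero power series whose roots (including $0$) occur at finitely many twists — and (2) that the restriction map has finite kernel and cokernel for all but finitely many $s$.

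I expect the main obstacle to be the uniform control of the local term at $p$ as $s$ varies: unlike the $\ell\nmid p$ places, the signed local condition $H^1_i(K_{\infty,p},A(s))$ is defined as an orthogonal complement of the kernel of a Coleman map, so its behavior under $\Gamma_0$-invariants is not visibly a matter of Galois cohomology of a finite module, and one must invoke the structural results of Hatley--Lei on $\ker\col_{f,i}$ together with lemma \ref{a} to see that the comparison map is an isomorphism up to a finite, eventually-trivial discrepancy. A secondary subtlety is making the phrase ``all but finitely many $s$'' genuinely uniform across the finitely many places $v\in\Sigma$ and across the two steps: each place contributes a finite exceptional set of $s$, and one takes the (finite) union, but one must check that the exceptional set for the global term and for the torsion-vanishing of $f_{i,s}(0)$ is also finite, which again reduces to the residual irreducibility hypothesis \textbf{(irred)} and the fact that a fixed nonzero element of $\Lambda$ vanishes at $1+X\mapsto$ (image of a given group element) for only finitely many twisting integers $s$.
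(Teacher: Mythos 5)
The paper's own proof is essentially a one-line citation: it invokes \cite[Prop.~3.3]{HatleyLei} verbatim, notes that although Hatley--Lei state the result for a single integer $s$ their argument in fact yields it for all but finitely many $s$, and gives a short dictionary between the two notations (the isomorphism $\Sel_i(A(s)/K_\infty)\cong\Sel_i(A_f(\theta\omega^{k-2})/K_\infty)\otimes\kappa^{k-s-2}$). Your proposal instead reconstructs the control-theorem argument that presumably underlies Hatley--Lei's Proposition~3.3, so it is a genuinely different (more self-contained) route to the same statement. The reconstruction is conceptually sound: \textbf{(inv)} kills the inflation--restriction error terms for the middle column of the fundamental diagram (note that since the cyclotomic character is trivial on $G_{\Q_{\infty,p}}$, the ``$m\ge 0$'' in \textbf{(inv)} is no real restriction, and the local vanishing at a place above $p$ forces the global vanishing over $K_\infty$); the local kernels at $\ell\ne p$ are the finite Tamagawa indices; the signed local term at $p$ is handled by the Mini-Control Lemma (you cite lemma~\ref{a} but mean lemma~\ref{a2}); and \textbf{(tor)} together with the twist-evaluation argument ($f_{i,s}(0)$ is the evaluation of a fixed nonzero power series at the finitely-many-times-vanishing points $\kappa(\gamma_0)^{-s}-1$) gives finiteness of $\Sel_i(A(s)/K_\infty)^{\Gamma_0}$ for all but finitely many $s$. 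Two comments. First, organizationally you invoke the paper's lemmas~\ref{a} and \ref{a2}, which appear after this proposition in the paper; since those two lemmas do not logically depend on Proposition~\ref{prop:determins_s}, there is no circularity, but a careful write-up would either reorder or verify that independence explicitly. Second, your phrasing of the snake-lemma bookkeeping is loose (the $H^1$ and $H^2$ of $\Gamma_0$ bound $\ker h$ and $\coker h$, not directly $\ker a$ and $\coker a$; it is the combination $\ker h=0$, $\coker h=0$, and $\ker g$ finite that forces $\ker a=0$ and $\coker a$ finite, with no need for surjectivity of $\rho$ at this stage). These are presentation issues rather than gaps; the underlying mathematics matches what the cited Hatley--Lei proposition delivers.
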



\begin{lemma}\label{a}
Let $\ell\in\Sigma$ and $v$ be any one of the finitely many places of $\Q_{\rm{cyc}}$ over $\ell$. \begin{enumerate}
    \item In the commutative diagram below, $h$ is an isomorphism and $\rho$ is surjective.
    \item In the case $\ell\neq p, \#\ker g_{/i,\ell} = c_\ell(A(s)).$
\end{enumerate}

	\begin{equation}\label{fundamental_diagram}
	\begin{tikzcd}
	0 \arrow[r] & \Sel_i(A(s)/\Q) \arrow[d, "a"] \arrow[r] &  H^1(\Q_\Sigma/\Q, A(s)) \arrow[d, "h"]  \arrow[r, "\rho"] & \underset{\ell \in \Sigma}{\prod} H^1_{/i}(\Q_\ell,A(s)) \arrow[d, "g=\prod g_{/i,\ell}"] \\
		0 \arrow[r] & \Sel_i(A(s)/\Q_{\rm{cyc}})^{\Gamma_0} \arrow[r] & H^1(\Q_\Sigma/\Q_{\rm{cyc}},A(s))^{\Gamma_0}  \arrow[r] & \underset{v\mid \ell, \ell \in \Sigma}{\prod} H^1_{/i}(\Q_{\cyc,v},A(s))^{\Gamma_0} 
	\end{tikzcd}
	\end{equation}
\end{lemma}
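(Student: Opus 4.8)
The plan is to prove Lemma \ref{a} by treating the three vertical maps in \eqref{fundamental_diagram} separately, using inflation--restriction for $h$, a standard local analysis (split into $\ell = p$ and $\ell \neq p$) for $g$, and then the snake lemma for $a$ and for the surjectivity of $\rho$.

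First I would address the map $h$ on global cohomology. Since $\Gamma_0 \cong \Z_p$ has cohomological dimension $1$, the inflation--restriction exact sequence reads
\begin{equation}\label{eq:infres}
0 \to H^1(\Gamma_0, A(s)^{G_{\Q_\Sigma/K_\infty}}) \to H^1(\Q_\Sigma/\Q, A(s)) \xrightarrow{h} H^1(\Q_\Sigma/K_\infty, A(s))^{\Gamma_0} \to H^2(\Gamma_0, A(s)^{G_{\Q_\Sigma/K_\infty}}).
\end{equation}
The outer terms vanish: the $H^2$ term is zero because $\mathrm{cd}(\Gamma_0) = 1$, and the $H^1$ term vanishes because of hypothesis \textbf{(inv)}, which forces $A(s)^{G_{\Q_\Sigma/K_\infty}} = 0$ (here one uses that $A(s)$ is, up to the finite-order twist $\theta\omega^s$, a cyclotomic twist of $A_f$, and that $G_{K_\infty}$ already contains the full decomposition group at $p$ whose invariants are being killed by \textbf{(inv)}; one must check that $G_{\Q_\Sigma/K_\infty}$-invariants are contained in $G_{\Q_{\infty,p}}$-invariants, which is immediate). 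Hence $h$ is an isomorphism.

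Next I would handle the local maps $g_{/i,\ell}$ and deduce surjectivity of $\rho$. For $\ell \neq p$ the local condition is the unramified one, and the target of $g_{/i,\ell}$ is $H^1_{/i}(K_{\infty,v}, A(s))^{\Gamma_0} = H^1_{/\mathrm{un}}(K_{\infty,v}, A(s))^{\Gamma_0}$. Here I would invoke the standard comparison (as in \cite[Section 3.1]{longovigni} or the analogous lemmas in \cite{VigniLongo}): the kernel of $H^1_{/\mathrm{un}}(\Q_\ell, A(s)) \to H^1_{/\mathrm{un}}(K_{\infty,v}, A(s))^{\Gamma_0}$ is controlled by the cohomology of the (procyclic, since $\ell \neq p$) group $\Gal(K_{\infty,v}/\Q_\ell)$ acting on an unramified module, and unwinding the diagram defining $c_\ell$ identifies $\ker g_{/i,\ell}$ with $H^1_{\mathrm{ur}}(\Q_\ell, A(s))/H^1_f(\Q_\ell, A(s))$, giving part (2). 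For $\ell = p$ one does not claim injectivity of $g_{/i,p}$; instead, the key input is the ``Mini-Control Lemma'' (lemma \ref{a}'s companion, referenced in the introduction) together with the properties of signed local conditions established by Hatley--Lei, which give that the relevant restriction maps on $H^1(\Q_p, -)$ versus $H^1(K_{\infty,p}, -)^{\Gamma_0}$ sit in an exact sequence with controlled (finite) kernel and a surjection onto the quotient. Once $h$ is an isomorphism, surjectivity of $\rho$ follows from a diagram chase: an element of $\prod_\ell H^1_{/i}(\Q_\ell, A(s))$ maps down to $\prod_v H^1_{/i}(K_{\infty,v}, A(s))^{\Gamma_0}$; I would need to lift it to $H^1(\Q_\Sigma/K_\infty, A(s))^{\Gamma_0}$ and pull back through $h^{-1}$, which requires knowing that the bottom row's map is surjective onto the relevant product — this uses that $\Sel_i(A(s)/K_\infty)$ being the kernel, the cokernel of the bottom horizontal map injects into $H^2(\Q_\Sigma/K_\infty, A(s))$, which vanishes or is controlled by Poitou--Tate/Greenberg's surjectivity results; alternatively surjectivity of $\rho$ can be extracted directly from the surjectivity statements in \cite{VigniLongo} (which the introduction explicitly says carry over). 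Finally, with $h$ an isomorphism and $\rho$ surjective, the snake lemma applied to \eqref{fundamental_diagram} yields $\ker a \cong \ker(g)$ restricted appropriately and the finiteness/control statements, completing the lemma.

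The main obstacle I anticipate is the prime $p$: showing that $\rho$ is surjective and that the local map $g_{/i,p}$ behaves well requires genuinely understanding the signed local condition $H^1_i(K_{\infty,p}, A_f)$ as the orthogonal complement of $\ker \col_{f,i}$, and that this local condition is ``$\Gamma_0$-cohomologically trivial enough'' for the control argument — this is exactly where the Mini-Control Lemma and the Hatley--Lei machinery on Coleman maps and Wach modules must be brought in, rather than a formal diagram chase. The $\ell \neq p$ parts and the inflation--restriction argument for $h$ are routine by comparison.
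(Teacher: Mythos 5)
Your treatment of $h$ is exactly the paper's: the inflation--restriction sequence with $\mathrm{cd}(\Gamma_0)=1$, together with \textbf{(inv)} to kill $A(s)^{G_{\Q_\Sigma/K_\infty}}$, forces $h$ to be an isomorphism. Your treatment of part (2) is also essentially right and in the same spirit as the paper: since $\ell\neq p$ and the signed local condition is the unramified one there, it agrees with the Greenberg local condition (and with Bloch--Kato away from $p$), so the paper just cites \cite[Lemma 5.3]{VigniLongo} to get $\#\ker g_{/i,\ell}=c_\ell(A(s))$. You gesture at exactly this comparison, though you don't quite state the key identification (signed $=$ Greenberg $=$ Bloch--Kato at $\ell\neq p$) that makes the citation legal.

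The gap is in your argument for surjectivity of $\rho$. The diagram chase you propose does not close: starting from $x\in\prod_\ell H^1_{/i}(\Q_\ell,A(s))$, you map down to $g(x)$, lift to $y\in H^1(\Q_\Sigma/K_\infty,A(s))^{\Gamma_0}$ (assuming surjectivity of the bottom row), and set $z=h^{-1}(y)$; but then $\rho(z)$ and $x$ agree only after applying $g$, i.e.\ $\rho(z)-x\in\ker(g)$, and there is no reason a priori that $\ker(g)\subseteq\operatorname{Im}(\rho)$. Indeed $\ker g$ is nontrivial precisely at the primes $\ell\neq p$ (part (2)!), so the chase is circular rather than a proof. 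Surjectivity of $\rho$ is a genuine input: the paper cites \cite[Prop.~3.3]{HatleyLei} and its proof, which in turn rests on a Cassels--Poitou--Tate argument using the $\Lambda$-cotorsion hypothesis \textbf{(tor)} (your hedge toward ``Poitou--Tate/Greenberg's surjectivity results'' is pointing at the right mechanism, but you should invoke it directly as the proof of surjectivity over the base field $\Q$, not as a rescue for a two-layer diagram chase that fails to close).

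Also note that the snake lemma at the end of your outline is not part of the statement of this lemma --- it belongs to the proof of Theorem~\ref{Thm:main1}, after Lemma~\ref{a} and Lemma~\ref{a2} are both in hand; including it here conflates the lemma with its downstream use.
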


\begin{lemma}[Control Lemma at the prime $p$]\label{a2}
The vertical maps in the commutative diagram below are isomorphisms:
\begin{equation}\label{fundamental_diagram_p}
	\begin{tikzcd}
	0 \arrow[r] & H^1_i(\Q_p,A(s)) \arrow[d, "g_{i,p}"] \arrow[r] &  H^1(\Q_p, A(s)) \arrow[d, "g_p"]  \arrow[r,"c"] &  H^1_{/i}(\Q_p,A(s))  \arrow[d, "g_{/i,p}"]\\
	0 \arrow[r] & H^1_i(\Q_{\cyc,p},A(s))^{\Gamma_0} \arrow[r] & H^1(\Q_{\cyc,p},A(s))^{\Gamma_0}  \arrow[r] &  H^1_{/i}(\Q_{\cyc,p},A(s))^{\Gamma_0} 
	\end{tikzcd}
	\end{equation}
\end{lemma}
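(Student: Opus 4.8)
The plan is to show the three vertical maps in \eqref{fundamental_diagram_p} are isomorphisms one at a time, controlling the outer two by means of the middle one. The top row is exact by the definition of $H^1_{/i}$, while the bottom row is the sequence of $\Gamma_0$-invariants of the short exact sequence $0\to H^1_i(K_{\infty,p},A(s))\to H^1(K_{\infty,p},A(s))\to H^1_{/i}(K_{\infty,p},A(s))\to 0$, and hence only left exact a priori; so it suffices to prove that $g_p$ and $g_{i,p}$ are isomorphisms and that $H^1\big(\Gamma_0,H^1_i(K_{\infty,p},A(s))\big)=0$, for then the five lemma applies. For $g_p$ I would run the inflation--restriction sequence
\[
0\to H^1\big(\Gamma_0,A(s)^{G_{K_{\infty,p}}}\big)\to H^1(\Q_p,A(s))\xrightarrow{g_p}H^1(K_{\infty,p},A(s))^{\Gamma_0}\to H^2\big(\Gamma_0,A(s)^{G_{K_{\infty,p}}}\big).
\]
Its rightmost term vanishes because $\Gamma_0\cong\Z_p$ has $p$-cohomological dimension one, and its leftmost term vanishes because $A(s)^{G_{K_{\infty,p}}}\subseteq A(s)^{G_{\Q_{\infty,p}}}$ while, over $G_{\Q_{\infty,p}}$, the twisting characters $\kappa^s,\theta,\omega^s$ all become trivial, so that $A(s)\cong A_f$ as $G_{\Q_{\infty,p}}$-modules and $A_f^{G_{\Q_{\infty,p}}}=0$ by \textbf{(inv)}. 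Thus $g_p$ is an isomorphism.

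For $g_{i,p}$ the key observation is that both signed local conditions at $p$ are cut out as orthogonal complements, under local Tate duality, of images of $\ker\col_{f,i}$. Fix $z\in\ker\col_{f,i}$, and for each $n$ let $z^{[n]}$ be its image in the cohomology over the finite layer $K_{n,p}\subseteq K_{\infty,p}$, so that the image $z_0$ of $z$ in $H^1(\Q_p,-)$ equals $\mathrm{cor}_{K_{n,p}/\Q_p}(z^{[n]})$ for every $n$. For any $\tilde x\in H^1(\Q_p,A(s))$, adjunction of corestriction and restriction under Tate duality gives
\[
\langle z_0,\tilde x\rangle_{\Q_p}=\langle z^{[n]},\mathrm{res}_{K_{n,p}/\Q_p}(\tilde x)\rangle_{K_{n,p}},
\]
and for $n\gg 0$ the right-hand side is exactly the value, in the limit pairing, of the image of $z$ against $g_p(\tilde x)$. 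Letting $z$ vary over $\ker\col_{f,i}$, this shows $\tilde x\in H^1_i(\Q_p,A(s))$ if and only if $g_p(\tilde x)\in H^1_i(K_{\infty,p},A(s))$; combined with the bijectivity of $g_p$, this forces $g_{i,p}$ to be an isomorphism.

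It remains to see that $H^1\big(\Gamma_0,H^1_i(K_{\infty,p},A(s))\big)=0$. Since $\Gamma_0\cong\Z_p$, this group is the module of $\Gamma_0$-coinvariants of $H^1_i(K_{\infty,p},A(s))$, and hence its Pontryagin dual is the module of $\Gamma_0$-invariants of $H^1_i(K_{\infty,p},A(s))^\vee$. Dualizing $0\to\ker\col_{f,i}\to\HIw(\Q_p,\T_f)\xrightarrow{\col_{f,i}}\im\col_{f,i}\to 0$ and invoking the Iwasawa-theoretic form of local Tate duality identifies $H^1_i(K_{\infty,p},A(s))^\vee$ with a twist of an isotypic piece of the ideal $\im\col_{f,i}$ of $\Lambda(\Gamma)$, in particular with a torsion-free $\Lambda(\Gamma)$-module. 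On such a module, the relevant twist of $\gamma_0-1$ (for a topological generator $\gamma_0$ of $\Gamma_0$) acts as multiplication by $u(1+X)-1$ for some unit $u\in\OO_E^\times$, namely a power of $\kappa(\gamma_0)$; since $\Lambda(\Gamma)=\OO_E[\Delta][[X]]$ is a finite product of the domains $\OO_E[[X]]$, in each of which $u(1+X)-1$ is a nonzero power series, this element is a nonzerodivisor and therefore acts injectively. Hence the module of $\Gamma_0$-invariants vanishes, so $H^1\big(\Gamma_0,H^1_i(K_{\infty,p},A(s))\big)=0$, and the five lemma finishes the argument.

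The step I expect to cost the most care is the bookkeeping underpinning the last two paragraphs: fixing the precise cyclotomic and tame twists relating $A(s)$, $\T_f$, and $A_f$ so that local Tate duality at finite and infinite level is genuinely perfect and $\Gamma$-equivariant, and verifying that the natural maps out of $\HIw(\Q_p,\T_f)$ (to $H^1(\Q_p,-)$, to the layers $H^1(K_{n,p},-)$, and to the relevant cohomology over $K_{\infty,p}$) are compatible with these pairings. This is precisely where the analysis of the signed local conditions at $p$ in \cite{HatleyLei} is used and, as signalled in the introduction, developed a little further. The cohomological-dimension and nonzerodivisor inputs, by contrast, are purely formal.
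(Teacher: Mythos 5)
Your proof is correct, and its core technical inputs coincide with the paper's: $g_p$ via inflation--restriction and \textbf{(inv)}, and the vanishing of $\Gamma_0$-coinvariants of $H^1_i(K_{\infty,p},A(s))$ from the observation that its Pontryagin dual is a torsion-free $\Lambda$-module (twist of $\mathrm{Im}(\col_{f,i})^\theta$) on which $\gamma_0-1$ acts by a nonzerodivisor. The genuine difference is one of route: you prove $g_p$, then establish $g_{i,p}$ is an isomorphism \emph{directly} from the adjunction of corestriction and restriction under local Tate duality, then close with the (snake/five) lemma to get $g_{/i,p}$; the paper instead proves $g_p$, then $g_{/i,p}$ by combining the coinvariants vanishing with a Hochschild--Serre identification quoted from \cite[Remark~2.5, p.~1276]{HatleyLei} -- in particular the facts $H^1(K_{\infty,p},A(s))^{\Gamma_0}=H^1(\Q_p,A(s))$ and $H^1_i(K_{\infty,p},A(s))^{\Gamma_0}=H^1_i(\Q_p,A(s))$ -- and only then recovers $g_{i,p}$ by the snake lemma. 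In effect, your adjunction computation re-derives the content of that cited remark rather than appealing to it, which makes the argument more self-contained but, as you rightly flag, shifts the burden onto verifying the $\Gamma$-equivariance and compatibility of the finite-level and Iwasawa-theoretic Tate pairings with the restriction and corestriction maps out of $\HIw(\Q_p,\T_f)$. The paper's route avoids this bookkeeping by citing \cite{HatleyLei}, at the cost of being less self-contained. Both are sound; neither yields additional generality.
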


\begin{proof}[Proof of Theorem \ref{Thm:main1}]

\noindent The assumption \textbf{(tor)} gives us that the Euler characteristic $\chi(\Gamma_0, \Sel_i(A(s)/\Q_{\rm{cyc}}))$ is well-defined: Indeed, proposition \ref{prop:determins_s}(1) shows that $\Sel_i(A(s)/\Q_{\rm{cyc}})^{\Gamma_0}$ is finite. Hence by lemma \ref{Greenlemma}, we know that  $\Sel_i(A(s)/\Q_{\rm{cyc}})_{\Gamma_0}$ is finite. Since $\Gamma_0\cong \Z_p$, we can identify $H^1(\Gamma_0,\Sel_i(A(s)/\Q_{\rm{cyc}}))$ with $\Sel_i(A(s)/\Q_{\rm{cyc}})_{\Gamma_0}$ and so the Euler characteristic equals
$$\chi(\Gamma_0,\Sel_i(A(s)/\Q_{\rm{cyc}}))=\frac{\#\Sel_i(A(s)/\Q_{\rm{cyc}})^{\Gamma_0}}{\#\Sel_i(A(s)/\Q_{\rm{cyc}})_{\Gamma_0}}.$$

For the main part of the proof, choose $s$ so that $\Sel_i(A(s)/\Q_{\rm{cyc}})^{\Gamma_0}$ is finite. This only excludes finitely many $s$ by proposition \ref{prop:determins_s}. Applying the snake lemma to diagram \eqref{fundamental_diagram}, we obtain from lemma \ref{a}(1) that $\coker a$ is finite, $\ker a$ is trivial, and  $ \Sel_i(A(s)/\Q)$ is finite. From lemma \ref{a}(2) and lemma \ref{a2}, we then have $$\#\Sel_i(A(s)/\Q_{\rm{cyc}})^{\Gamma_0} =\#\Sel_i(A(s)/\Q) \cdot \prod_{\ell\in \Sigma, \\ \ell \nmid p} c_\ell(A(s)).$$

But the left-hand side is the Euler characteristic, since from \cite[Proof of Theorem 3.1, very end of Section 3]{HatleyLei}, $\Sel_i(A(s)/\Q_{\rm{cyc}})_{\Gamma_0}=0$, i.e. we really have

\begin{equation}\label{EC}
\chi(\Gamma_0, \Sel_i(A(s)/\Q_{\rm{cyc}})) = \#\Sel_i(A(s)/\Q) \cdot \prod_{\ell\in \Sigma, \\ \ell \nmid p} c_\ell(A(s)),
\end{equation}
which is the second equality in equation \eqref{eq: main0} of the theorem. 

Greenberg's Lemma \ref{Greenlemma} for $M=\Sel_i(A(s)/\Q_{\rm{cyc}})$ implies  $f_{i,s}(0)\neq 0$ and 
$$\#\left(\frac{\OO_E}{f_{i,s}(0)\OO_E}\right)=\chi(\Gamma_0,\Sel_i(A(s)/\Q_{\rm{cyc}})),$$

giving us the first equality.
(Note that this is the supersingular analogue of \cite[Prop. 4.3]{VigniLongo}).

To obtain the equalities in equation \eqref{eq: main1}, recall from \cite[Prop. 2.14 and Rem. 2.15]{HatleyLei} that for $i=2$ and $s \in \{0,...,k-2\}$, $$\Sel_2(A(s)/\Q)=\Sel_{\BK}(A(s)/\Q), $$ and that if $i=1$ and if we further assume that $a_p(f) \neq \varepsilon(p)p^s+p^{k-s-2}$, then $$\Sel_1(A(s)/\Q) \cong \Sel_{\BK}(A(s)/\Q).$$

\end{proof}

\begin{remark}
In the last part of the proof of Theorem \ref{Thm:main1}, we needed to restrict $s$ in the range $\{0,...,k-2\}$ because this is needed in the proof of \cite[Prop. 2.12]{HatleyLei}. In their proof they first use a result from \cite{LoefflerZerbes2014} where they need $s$ to be nonnegative. Later in their proof they use a result from  \cite{LLZ2017asymptotic} where they need $s$ to be in the range $\{0,...,k-2\}$ so that the twisted representation has appropriate Hodge-Tate weights\footnote{We thank Jeffrey Hatley for pointing this out to us.}. 
\end{remark}
\subsection{Proofs of Proposition \ref{prop:determins_s} , lemma \ref{a}, and lemma \ref{a2}}
\begin{proof}[Proof of Proposition \ref{prop:determins_s}]
{Consider the Selmer group $$\Sel_i(A(s)/\Q_{\rm{cyc}}) = \Sel_i(\A_{f,s}(\theta \omega^s)/\Q_{\rm{cyc}}) \cong \Sel_i(A_f(\theta \omega^{k-2})/\Q_{\rm{cyc}}) \otimes \kappa^{k-s-2}.$$
With $\eta=\theta^{-1}\omega^{2-k}$ (independent of $s$) we have the following isomorphism: 
$$\Sel_i(A_f(\theta \omega^{k-2})/\Q_{\rm{cyc}}) \otimes \kappa^{k-s-2} \cong \Sel_i(A_f/\Q_\infty)^\eta \otimes \kappa^{k-s-2}.$$
But the hypothesis \textbf{(tor)} gives that $\Sel_i(A_f/\Q_\infty)^\eta$ is $\Lambda$-cotorsion. Hence for all but finitely many $s \in \Z$, 
\begin{equation}\label{S1}
\big(\Sel_i(A_f/\Q_\infty)^\eta \otimes \kappa^{k-s-2}\big)^{\Gamma_0} \text{ is finite.}
\end{equation}
  This proves part (1) of Proposition \ref{prop:determins_s}. \vspace{.2cm}\\
In order to prove part (2), we need to  consider the fundamental diagram \eqref{fundamental_diagram}. Note that $\ker(h)$ and $\coker(h)$ can be regarded as inside the inflation-restriction exact sequence, and are zero because of hypothesis \textbf{(inv)}. (Cf. also \cite[Lemma 3.3]{HachimoriMatsuno}). Let $\ell \in \Sigma$ and $ v$ be any one of the finitely many places of $\Q_{\rm{cyc}}$ over $\ell$. By the inflation-restriction exact sequence, the kernel of the map $H^1(\Q_{\ell},A(s)) \rightarrow H^1(\Q_{\cyc,v},A(s))^{\Gamma_0}$ is $H^1(\Q_{\cyc,v}/\Q_{\ell},A(s)^{G_{\Q_{\cyc,v}}})$. This kernel is trivial if:
\begin{enumerate}
    \item $\ell=p$ (by \textbf{(inv)}), or if
    \item $\ell$ splits completely over $\Q_{\rm{cyc}}$.
\end{enumerate}
Assume that $\ell$ does not satisfy either of the above conditions. Let $\gamma_v$ be a topological generator for $\Gal(\Q_{\cyc,v}/\Q_{\ell})$, and let $B(s)=A(s)^{G_{\Q_{\cyc,v}}}.$ Then $$H^1(\Q_{\cyc,v}/\Q_{\ell}, A(s)^{G_{\Q_{\cyc,v}}}) \cong B(s)/(\gamma_v-1)B(s)$$ and we have an exact sequence 
$$0 \rightarrow A(s)^{G_{\Q_{\ell}}} \rightarrow B(s) \xrightarrow{\gamma_v-1} B(s) \rightarrow B(s)/(\gamma_v-1)B(s) \rightarrow 0.$$
Note that for all but finitely many $s$,
\begin{equation}\label{S2}
A(s)^{G_{\Q_{\ell}}} \text{ is finite}.
\end{equation}
  In this case, we have that $B(s)_{div} \subset (\gamma_v-1)B(s)$, where $B(s)_{div}$ is the maximal divisible subgroup of $B(s)$. Hence it follows that $B(s)/(\gamma_v-1)B(s)$ is bounded by $B(s)/B(s)_{div}$, and this is finite. Hence for all but finitely many $s\in \Z$, the proof of Proposition \ref{prop:determins_s} is  complete.
}
\end{proof}

\begin{proof}[Proof of Lemma \ref{a}]
{As noted above in the proof of Proposition \ref{prop:determins_s}, the fact that the map $h$ is an isomorphism follows form the inflation-restriction exact sequence and the hypothesis \textbf{(inv)}.}

The surjectivity of $\rho$ follows from \cite[Prop. 3.3]{HatleyLei} and its proof.

The Greenberg local condition at $v$ (see \cite[Sec. 3.4.2]{VigniLongo})  coincides with the Bloch--Kato local condition defined in section \ref{sec: BKloc}. Hence, by \cite[Lemma 5.3]{VigniLongo},  we obtain $$\#\ker g_{/i,\ell} = c_\ell(A(s)),$$
which is indeed finite by \cite[Lemma 3.1]{VigniLongo}. 
(It is also interesting to note that if $v \notin \Sigma$ the Tamagawa factors are trivial (see \cite[Lemma 3.3]{VigniLongo})).
\end{proof}

\begin{proof}[Proof of Lemma \ref{a2}]
The kernel  of the map $g_p$ is $H^1(\Q_{\cyc,p}/\Q_p,A(s)^{G_{\Q_{\cyc,p}}})$  and its cokernel maps into
$H^2(\Q_{\cyc,p}/\Q_p,A(s)^{G_{K_{\infty,p}}})$ which are both trivial because of our hypothesis \textbf{(inv)}. Hence the map $g_p$ is an isomorphism. 

Next, we analyze the kernel and the cokernel of the map  $g_{/i,p}$.

We have the following isomorphism by duality
$$H^1_i(\Q_{\cyc,p},A(s)) =\big(\mathrm{Im}(\col_{f,i})^\theta \otimes \kappa^{-s}\big)^\vee.$$
Since $\mathrm{Im}(\col_{f,i})^\theta \subset \Lambda$, it implies that $$\big(\mathrm{Im}(\col_{f,i})^\theta \otimes \kappa^{-s}\big)^{\Gamma_0}=0.$$
Hence we have
\begin{equation}\label{eq:explain}
H^1_i(\Q_{\cyc,p},A(s))_{\Gamma_0}=0.
\end{equation}
Consider the short exact sequence 
$$0 \rightarrow H^1_i(\Q_{\cyc,p},A(s)) \rightarrow H^1(\Q_{\cyc,p}, A(s)) \rightarrow H^1_{/i}(\Q_{\cyc,p}, A(s)) \rightarrow 0.$$
Combining \eqref{eq:explain} with the isomorphism 
$$H^1_i(\Q_{\cyc,p},A(s))_{\Gamma_0}\cong H^1(\Gamma_0, H^1_i(\Q_{\cyc,p},A(s))),$$
(see e.g. \cite[Prop. 1.7.7]{Cohonumbfields}), we obtain $$H^1_{/i}(\Q_{\cyc,p},A(s))^{\Gamma_0}\cong \frac{H^1(\Q_{\cyc,p},A(s))^{\Gamma_0}}{H^1_i(\Q_{\cyc,p},A(s))^{\Gamma_0}},$$
 as in p. 1276 of \cite{HatleyLei}.

But it follows from \cite[p. 1276, line 10]{HatleyLei} that
$$\frac{H^1(\Q_{\cyc,p},A(s))^{\Gamma_0}}{H^1_i(\Q_{\cyc,p},A(s))^{\Gamma_0}} \cong H^1_{/i}(\Q_p,A(s))$$
using the Hochschild-Serre spectral sequence and the facts that
\begin{align*}
  H^1(\Q_{\cyc,p},A(s))^{\Gal(\Q_{\cyc,p}/\Q_p)}&=H^1(\Q_p,A(s)), \\
  H^1_i(\Q_{\cyc,p},A(s))^{\Gal(\Q_{\cyc,p}/\Q_p)}&=H^1_i(\Q_p,A(s)),
\end{align*}
cf. \cite[Remark 2.5]{HatleyLei}.
Finally, we obtain from this that $$H^1_{/i}(\Q_{\cyc,p},A(s))^{\Gamma_0}\cong  H^1_{/i}(\Q_p,A(s)),$$
which shows that the map $g_{/i,p}$ is also an isomorphism.

The map $c$ is surjective by definition, and hence by the snake lemma we obtain that the map $g_{i,p}$ is also an isomorphism. (This generalizes an analogous property that has been proved by Ahmed and Lim in the case of elliptic curves (see \cite[Lemma 2.6]{AhmedLim})).
\end{proof}
{
    To summarize, the local conditions at the prime $p$ for the two signed Selmer groups $\Sel_i(A(s)/\Q)$ and $\Sel_i(A(s)/\Q_\cyc)^{\Gamma_0}$ are shown to be isomorphic in Lemma \ref{a2}, which allows us to bypass computing the kernel of $g_{/i,p}$; this is the key ingredient in computing the Euler characteristic formula in Theorem \ref{Thm:main1}. The signed conditions here play a crucial role because the Selmer groups are defined such that their local conditions at $p$ satisfy the compatibility conditions \eqref{Comp_local_p} and \eqref{cyc_loc_p}.
    Also note that we have chosen our twists $A(s)$ (i.e. those $s$ for which both \eqref{S1} and \eqref{S2}  are true) such that Proposition \ref{prop:determins_s} holds; part (1) of this proposition is used to guarantee that the Euler characteristic is well-defined by Greenberg's Lemma (cf. Lemma \ref{Greenlemma}). To treat the places away from $p$, we rely on the methods of Longi--Vigni \cite{VigniLongo}.
}

\bibliographystyle{alpha}
\bibliography{main}
\end{document}